\date{\today}
\newtheorem{theorem}{Theorem}[section]
\newtheorem{conjecture}[theorem]{Conjecture}
\theoremstyle{definition}
\newtheorem{remark}[theorem]{Remark}
\title[3-distance 5-designs]{\bf On 3-distance spherical 5-designs}
\author[P. Boyvalenkov]{Peter Boyvalenkov$^\dagger$}
\address{Institute of Mathematics and Informatics, Bulgarian Academy of Sciences,
8 G Bonchev Str.,
1113  Sofia, Bulgaria}
\email{peter@math.bas.bg}
\thanks{\noindent $^\dagger$ The research of the first author was supported, in part, by Bulgarian NSF under project KP-06-N32/2-2019. }
\author[N. Safaei]{Navid Safaei$^{\dagger}$}
\address{Research Institute of  Policy Making, Sharif University of Technology,
Tehran, Iran}
\email{navid\_safaei@gsme.sharif.edu}
\begin{document}
\maketitle

\begin{abstract}
Inspired by a recently formulated conjecture by Bannai et al. we investigate spherical codes which admit exactly three different 
distances and are spherical 5-designs. 
Computing and analyzing distance distributions we provide new proof of the fact (due to Levenshtein) that such codes are maximal and 
rule out certain cases towards a proof of the conjecture. 
\end{abstract}
%
%

\section{Introduction}

Recently, Bannai et al. \cite{BBXYZ} formulated the following conjecture. 

\begin{conjecture} \label{cb}
If $C \subset \mathbb{S}^{n-1}$ is a spherical 3-distance 5-design, then one of the following holds.

(1) $n=2$ and $C$ is a regular hexagon or a regular heptagon.

(2) $C$ is a tight spherical 5-design.

(3) $C$ is derived from a tight spherical 7-design.
\end{conjecture}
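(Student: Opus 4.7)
The plan is to combine the moment conditions for a spherical 5-design with the strong restriction imposed by having only three nontrivial inner products. Let the pairwise inner-product set of $C$ away from the diagonal be $\{\alpha_1,\alpha_2,\alpha_3\}$ and, for each $x\in C$, let $A_i(x)=|\{y\in C\setminus\{x\}:\langle x,y\rangle=\alpha_i\}|$ denote its local distance distribution. The Gegenbauer-polynomial identities for $k=1,\dots,5$ yield, at each fixed $x$, a system of five linear equations in the three unknowns $A_1(x),A_2(x),A_3(x)$ whose right-hand sides depend only on $n$ and not on $x$. Consistency of this over-determined system therefore forces the $A_i(x)$ to be independent of $x$, so $C$ is distance-invariant, and it imposes explicit polynomial relations between the $\alpha_i$ and $|C|$.

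First I would perform the elimination explicitly: the identities for $k=1,2$ together with $\sum_i A_i=|C|-1$ determine the $A_i$ as rational functions of $n$, $|C|$ and the elementary symmetric functions of the $\alpha_i$, and the identities for $k=3,4,5$ then become nontrivial constraints. Combined with the 3-distance cardinality bound $|C|\le\binom{n+2}{3}+\binom{n+1}{2}$, with the 5-design lower bound $|C|\ge n(n+1)$, and with the positivity and integrality of the $A_i$, this should produce, for each dimension $n$, a short list of admissible parameter sets. For $n=2$ one finishes by hand: three distinct pairwise distances on $\mathbb{S}^1$ force $|C|\le 7$, and the distance-distribution equations together with the 5-design condition single out the regular hexagon and the regular heptagon.

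For $n\ge 3$ I would match the surviving parameters against the known spectra and cardinalities of tight spherical 5-designs (antipodal, with $|C|=n(n+1)$ and inner-product set $\{-1,-\alpha,\alpha\}$) and of the 3-distance 5-designs obtained as derived designs from tight spherical 7-designs (a layer $\{y\in D:\langle x,y\rangle=c\}$ of a tight 7-design $D$ projected onto the orthogonal subsphere). The main obstacle will be excluding the remaining spurious parameter triples that satisfy all of the identities, bounds and integrality constraints derived above yet do not belong to case (2) or (3). I would try to kill these by applying higher-degree moment identities to the derived subconfigurations $\{y\in C:\langle x,y\rangle=\alpha_i\}$ and by Levenshtein-type linear-programming refinements of the cardinality bound, but the hardest cases likely require additional structural, possibly representation-theoretic, input; this is consistent with the abstract reporting only a partial elimination of cases rather than a full proof of the conjecture.
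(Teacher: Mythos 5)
You are being asked to assess a statement that is labelled a conjecture, and the paper does not prove it either: it remains open, and the paper contributes only partial results (the determination of the admissible inner products, the divisibility condition $n\mid 2M$, and a computer verification for all $n\le 1000$). Your proposal is therefore a research programme rather than a proof, and to your credit your last sentence concedes this. The programme you outline coincides with what the paper actually carries out: the odd-moment identities form a Vandermonde-type system whose right-hand sides all equal $-1$ independently of $x$, which yields closed formulas for the local distance distribution $(X,Y,Z)$ and hence distance-invariance; the full system of moment equations forces $a,b,c$ to be the roots of an explicit cubic in $n$ and $M$ (equivalently, of the Levenshtein polynomial $P_3(t)P_2(s)-P_3(s)P_2(t)$, which the paper also obtains by a quadrature/structure argument); and integrality of the symmetric expression $XYZ$ as a rational function of $n$ and $M$ yields $n\mid 2M$. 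Your treatment of $n=2$ (the $2s+1$ bound for $s$-distance sets on the circle giving $|C|\le 7$) is fine.

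The genuine gap is precisely the step you defer to at the end: eliminating the ``remaining spurious parameter triples.'' This is not a routine loose end to be mopped up by more of the same; it is the entire open content of the conjecture. The paper's own computations show why: even after imposing the Levenshtein inner products, nonnegativity and integrality of $(X,Y,Z)$, and $n\mid 2M$, spurious parameters survive (for instance $(n,2M/n)=(341,3744)$ and $(638,7011)$), and these are killed only by case-by-case analysis of the derived $3$-designs $C_a$, not by any uniform criterion; the authors explicitly note that the natural candidate condition $n+5\mid 9T$ fails for these examples. The tools you propose for the endgame --- moment identities for the derived configurations and Levenshtein-type linear-programming refinements --- are exactly the tools the paper already deploys without closing the argument. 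Consequently the proposal cannot be accepted as a proof of the statement: it reproduces, in outline, the known partial results and then restates the difficulty rather than resolving it.
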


In fact, in \cite{BBXYZ} Conjecture \ref{cb} was referred to as "Many people have conjectured, but not written down, the classification of 3-distance 5-designs". Indeed, it probably goes back to 1977 when Delsarte, Goethals and Seidel \cite{DGS} introduced spherical designs in a seminal 
paper. The first author was among these who considered similar questions in 1990's (see \cite{BD,BDL}) and is aware about 
the conjecture since then. 

It is known that the cardinality of any spherical 3-distance 5-design $C \subset \mathbb{S}^{n-1}$ satisfies
\[ n(n+1) \leq |C| \leq {n+2 \choose 3}+{n+1 \choose 2}=\frac{n(n+1)(n+5)}{6}. \]
Both bounds are due to Delsarte-Goethals-Seidel \cite{DGS}. The lower bound follows since $C$ is a spherical
5-design, the upper bound -- since $C$ is a spherical 3-distance set (for upper bounds for few distance sets 
see also \cite{BBS,BKNS,GY,HR,PP}). The main result in Levenshtein \cite{Lev92} (see also \cite[Section 5]{Lev98}) 
implies that any spherical 3-distance 5-design is a maximal code. 

In this paper we present new proof of the Levenshtein's result from \cite{Lev92} and derive necessary conditions
which underline a possible way to the complete proof of Conjecture \ref{cb}. To this end we compute and carefully investigate 
the distance distributions of spherical 3-distance 5-designs. We prove that twice the cardinality is divisible by the dimension $n$, thus reducing 
by a factor of $n$ the number of cases under suspicion. 

In Section 2 we prove that every spherical 3-distance 5-design needs to have its three inner products exactly equal to the 
zeros of certain polynomial used by Levenshtein for derivation of his bounds. Section 3 is devoted to computing the distance 
distribution of corresponding codes and different presentations of the results. In Section 4 we obtain divisibility conditions
which rule out significant amount of cases. Section 5 describes derived codes as a possible continuation of our investigation. 
In Section 6 we explain our computer investigation which verifies Conjecture \ref{cb} in all dimensions $n \leq 1000$. 

\section{Inner products}

Let $C \subset \mathbb{S}^{n-1}$ be a spherical 3-distance 5-design of cardinality $|C|=M$ and 
inner products $a<b<c$. The case $n=2$ is elementary. If $n \geq 3$ and $C$ is a tight 5-design, then $a=-1$, $c=-b=1/m$, where $m$ is odd positive integer,
such that $n=m^2-2$ or $m=1/\sqrt{5}$ and $C$ is the icosahedron in three dimensions. Examples are known 
for $m=3$ and $5$ only, and another folklore conjecture says that no other tight 5-designs exist. 
Thus, in what follows we focus in dimensions $n \geq 3$ and assume that $C$ is not a tight spherical 5-designs. 
Conjecture \ref{cb} is now equivalent to prove that always (3) happens. In this case, $C$ has dimension $n=3m^2-5$, cardinality $M=m^4(3m^2-5)/2$
and inner products $-1/(m-1)$, $-1/(m^2-1)$, and $1/(m+1)$, where $m \geq 2$ is a positive integer. Examples are 
known for $m=2$ and $3$ only. 

The main result from \cite{Lev92}
implies that $|a|>|c|>|b|>0$ (see \cite[Corollary 3.9]{BD} for detailed proof of these inequalities), and
$a$ and $b$ are the roots of the quadratic equation
\[ (n + 2)[(n + 2)c^2 + 2c-1]t^2 + 2c(c + 1)(n + 2)t + 3-(n + 2)c^2 = 0, \]
where $c$ satisfies the equation
\begin{equation} \label{abc-equ}
M=\frac{n((n+2)(n+3)c^2+4(n+2)c-n+1)(1-c)}{2c(3-(n+2)c^2)}.
\end{equation}
In the terminology of \cite{Lev92}, one has $a=\alpha_0$, $b=\alpha_1$, $c=\alpha_2=s$,  and
$\alpha_0$, $\alpha_1$, and $\alpha_2=s$ are the zeros of the third degree polynomial 
\begin{equation}
P_{3}(t)P_{2}(s) - P_{3}(s)P_{2}(t)=0, \label{op_eq1}
\end{equation}
where $P_i(t)=P_i^{(\frac{n-1}{2},\frac{n-3}{2})}(t)$ is a Jacobi polynomial normalized for $P_i(1)=1$,
and $s$ is determined as the maximal root of the equation $M=L_5(n,s)$ (see Theorem 4.1 in \cite{Lev92}).

We present new proof of the above facts by using a slight generalization of a result from \cite{BN} on the structure of spherical designs.  
We use the following definition for a spherical design \cite{FL}: a code
$C \subset \mathbb{S}^{n-1}$ is a spherical $\tau$-design if and
only if for any point $y \in \mathbb{S}^{n-1}$ and any real
polynomial $f(t)$ of degree at most $\tau$, the equality
\begin{equation}
\label{defin_f}
\sum_{x \in C}f(\langle x,y \rangle ) = f_0|C|
\end{equation}
holds, where $f_0$ is the first coefficient in the Gegenbauer expansion $f(t)=\sum_{i=0}^k f_i P_i^{(n)}(t)$. Another useful tool 
is the Levenshtein quadrature \cite[Theorems 4.1, 4.2]{Lev92}, a particular case of which says that 
\begin{equation}
\label{qf}
f_0=\frac{f(1)}{M}+\rho_0 f(\alpha_0)+\rho_1 f(\alpha_1)+\rho_2 f(\alpha_2)
\end{equation}
for every polynomial $f$ of degree at most 5. Here the weights $\rho_i$, $i=0,1,2$, are positive.

\begin{theorem} \label{max-l}
If $C \subset \mathbb{S}^{n-1}$ is a spherical 3-distance 5-design, then its inner products are exactly as described above. 
\end{theorem}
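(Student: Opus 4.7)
The plan is to compare two representations of the same linear functional $f \mapsto M f_{0}$ on the space of polynomials of degree at most $5$. First, fix any base point $y \in C$ and split the design identity \eqref{defin_f} according to the inner product $\langle x, y\rangle$:
\[
f(1) + N_{a}(y)\,f(a) + N_{b}(y)\,f(b) + N_{c}(y)\,f(c) = M f_{0},
\]
where $N_{a}(y), N_{b}(y), N_{c}(y)$ count the elements of $C \setminus \{y\}$ whose inner product with $y$ equals $a, b, c$ respectively. Second, multiplying the Levenshtein quadrature \eqref{qf} by $M$ yields
\[
f(1) + M\rho_{0}\,f(\alpha_{0}) + M\rho_{1}\,f(\alpha_{1}) + M\rho_{2}\,f(\alpha_{2}) = M f_{0}.
\]
Subtracting the two identities produces
\[
N_{a}(y)\,f(a) + N_{b}(y)\,f(b) + N_{c}(y)\,f(c) = M\rho_{0}\,f(\alpha_{0}) + M\rho_{1}\,f(\alpha_{1}) + M\rho_{2}\,f(\alpha_{2}),
\]
an identity valid for every polynomial $f$ of degree at most $5$ and every $y \in C$.

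The next step is to conclude that the multisets $\{(a,N_{a}),(b,N_{b}),(c,N_{c})\}$ and $\{(\alpha_{0},M\rho_{0}),(\alpha_{1},M\rho_{1}),(\alpha_{2},M\rho_{2})\}$ coincide. Suppose, for contradiction, that some inner product, say $a$, does not belong to $\{\alpha_{0},\alpha_{1},\alpha_{2}\}$. Then the combined support has at most six pairwise distinct nodes, so Lagrange interpolation supplies a polynomial of degree at most $5$ which equals $1$ at $a$ and vanishes on $\{b,c\} \cup \{\alpha_{0},\alpha_{1},\alpha_{2}\}$. Picking $y \in C$ for which $N_{a}(y) \geq 1$ (such a $y$ exists because $a$ is actually realised as an inner product in $C$) and substituting this $f$ into the derived identity forces $N_{a}(y) = 0$, a contradiction. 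Hence $\{a,b,c\} = \{\alpha_{0},\alpha_{1},\alpha_{2}\}$ as sets. Specialising $f$ to the three quadratic Lagrange polynomials supported on the nodes $\{a,b,c\}$ now gives $N_{a}(y)=M\rho_{\sigma(a)}$, $N_{b}(y)=M\rho_{\sigma(b)}$, $N_{c}(y)=M\rho_{\sigma(c)}$ for one permutation $\sigma$ independent of $y$; so the distance distribution is actually constant on $C$.

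Once the identification $\{a,b,c\}=\{\alpha_{0},\alpha_{1},\alpha_{2}\}$ is established, the polynomial equation \eqref{op_eq1} and the quadratic relation between $a, b, c$ stated in the theorem are automatic, because these are exactly the algebraic relations defining Levenshtein's quadrature nodes once $s=\alpha_{2}$ is fixed; equation \eqref{abc-equ} is the resulting form of $M=L_{5}(n,s)$. I expect the main technical subtlety to be the Lagrange interpolation step: one has to argue for each of the three inner products separately that there is a base point $y$ with positive count, and track that the six nodes $\{a,b,c,\alpha_{0},\alpha_{1},\alpha_{2}\}$ really are distinct when the hypothesis of contradiction is applied, so that a degree-$5$ polynomial with the required separating behaviour actually exists.
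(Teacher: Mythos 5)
Your argument is correct, and it reaches the conclusion by a genuinely different route than the paper. The paper invokes the Boyvalenkov--Nikova result (\cite{BN}, Corollary 2.3) that each of the four intervals $[-1,\alpha_0]$, $[\alpha_0,\alpha_1]$, $[\alpha_1,\alpha_2]$, $[\alpha_2,1)$ must contain an inner product of $C$, and then applies a pigeonhole argument (three inner products cannot sit in the interiors of four intervals) to force the inner products onto the nodes. You instead subtract the quadrature \eqref{qf} from the design identity \eqref{defin_f} to obtain a signed measure on at most six points that annihilates all polynomials of degree at most $5$, and kill it node by node with Lagrange interpolation. Both proofs consume the same external input (the Levenshtein quadrature with node $1$ of weight $1/M$ and positive weights $\rho_i$, which requires $s$ to be chosen so that $M=L_5(n,s)$), but yours is more self-contained: it avoids the interval lemma entirely, and it delivers as a free by-product that $A_t(x)=M\rho_i$ for every $x\in C$, i.e.\ the constancy and the explicit values of the distance distribution, which the paper only derives later in Section 3 via the Vandermonde-type system. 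In fact your case analysis can be streamlined: since each $\rho_i>0$, the vanishing of the signed measure already forces every $\alpha_i$ to lie in $\{a,b,c\}$ with positive count for \emph{every} base point $y$, so you do not even need to select a special $y$ realising a given inner product. The one point worth making explicit is that the nodes $\alpha_0,\alpha_1,\alpha_2$ are pairwise distinct (they are the zeros of the degree-three Levenshtein polynomial \eqref{op_eq1}), which is what lets you conclude the set equality from the inclusion $\{a,b,c\}\subseteq\{\alpha_0,\alpha_1,\alpha_2\}$; this is standard in Levenshtein's framework and is assumed by the paper as well.
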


\begin{proof}
Using \eqref{qf} and suitable polynomials in \eqref{defin_f} Boyvalenkov and Nikova \cite[Corollary 2.3]{BN} proved a general result which 
implies in our context that all four intervals $[-1,\alpha_0]$, $[\alpha_0,\alpha_1]$, $[\alpha_1,\alpha_2]$, and 
$[\alpha_2,1)$ contain an inner product of $C$. This means that at least one of these intervals does not contain an inner product as interior point, 
implying as in \cite{BN} that all intervals do so, i.e. $a=\alpha_0$, $b=\alpha_1$, and $c=\alpha_2$.
\end{proof}

\begin{remark}
The generalization of Corollary 2.3 from \cite{BN} mentioned above consists of the fact that it is valid not only for 
$s \in (\xi_{k-1},\eta_k)$ as stated in \cite{BN} (we use here the notations from that paper) but for every $s \in (\xi_k,t_k)$, where $t_k>\eta_k$ 
is the largest zero of the Gegenbauer polynomial $P_k^{(n)}(t)$ (see also the comment after Theorem 2.3 in \cite{BBD}).
In fact, this observation allows any cardinalities $M \geq 2$ (though we need only $M \leq n(n+1)(n+5)/6$). 
\end{remark}

The proof of Theorem \ref{max-l} is naturally (and obviously) extended to the the following general result. 

\begin{theorem} \label{max-l-general}
If $C \subset \mathbb{S}^{n-1}$ is a spherical $k$-distance $(2k-1)$-design, then its inner products are exactly the roots 
of the Levenshtein polynomial
\begin{equation}
P_{k}(t)P_{k-1}(s) - P_{k}(s)P_{k-1}(t)=0, \label{op_eq2}
\end{equation}
where $P_i(t)=P_i^{(\frac{n-1}{2},\frac{n-3}{2})}(t)$ is a Jacobi polynomial normalized for $P_i(1)=1$ and $s$ is
determined as the maximal root of the equation
\[ |C|=L_{2k-1}(n,s), \]
$L_{2k-1}(n,s)$ is the Levenshtein bound.
\end{theorem}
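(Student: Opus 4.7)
The strategy is to replicate the argument of Theorem \ref{max-l} verbatim, replacing ``3'' by ``$k$'' and ``5'' by ``$2k-1$'' throughout. Concretely, I would first invoke the general form of the Levenshtein quadrature \cite[Theorems 4.1, 4.2]{Lev92}, which guarantees that for every polynomial $f$ of degree at most $2k-1$,
\begin{equation*}
f_0 = \frac{f(1)}{M} + \sum_{i=0}^{k-1} \rho_i f(\alpha_i),
\end{equation*}
with positive weights $\rho_i$ and nodes $\alpha_0 < \cdots < \alpha_{k-1} = s$ equal to the roots of \eqref{op_eq2}. The hypothesis $|C| = L_{2k-1}(n,s)$ with $s$ chosen as the largest such root is designed precisely to place $s$ in the range where this quadrature is valid.

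Combining the $(2k-1)$-design condition \eqref{defin_f} with this quadrature yields, for any $y \in C$ and any test polynomial $f$ of degree at most $2k-1$, the identity $\sum_{i} A_i(y) f(a_i) = |C|\sum_{j=0}^{k-1} \rho_j f(\alpha_j)$, where $a_0 < \cdots < a_{k-1}$ are the actual inner products of $C$ and the $A_i(y)$ are the (positive) distance distribution components. This is exactly the input needed for the generalized form of \cite[Corollary 2.3]{BN} described in the Remark above, which I would apply next to conclude that each of the $k+1$ consecutive intervals
\begin{equation*}
[-1,\alpha_0],\ [\alpha_0,\alpha_1],\ \ldots,\ [\alpha_{k-2},\alpha_{k-1}],\ [\alpha_{k-1},1)
\end{equation*}
contains at least one inner product of $C$. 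Since $C$ has only $k$ distinct inner products but there are $k+1$ intervals, at least one interval cannot contain any inner product in its interior; the mechanism of \cite{BN} then propagates this to every interval, forcing $a_i = \alpha_i$ for all $i$.

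The main obstacle, and the only point requiring care beyond the $k=3$ case, is verifying that the generalized Boyvalenkov--Nikova argument really does cover the full admissible range of $s$ for arbitrary $k$: the test polynomials used in \cite{BN} must continue to furnish the correct sign relations when $s \in (\xi_k, t_k)$ rather than the narrower $(\xi_{k-1}, \eta_k)$ originally stated there. This is precisely the generalization flagged in the Remark and ultimately reduces, as in \cite{BBD}, to checking that the relevant Gegenbauer-polynomial inequalities persist on the extended interval. Once that input is secured, the remainder of the proof is a routine transcription of the $k=3$ case handled in Theorem \ref{max-l}.
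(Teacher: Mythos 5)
Your proposal follows essentially the same route as the paper: invoke the (generalized) Corollary 2.3 of Boyvalenkov--Nikova to show each of the $k+1$ intervals determined by the quadrature nodes contains an inner product, apply the pigeonhole principle to force one interval to have no interior inner product, and then use the rigidity of the same corollary to conclude $a_i=\alpha_i$ for all $i$. The only difference is that you spell out the care needed in extending the range of $s$ to $(\xi_k,t_k)$, which the paper delegates to its preceding Remark rather than to the proof itself.
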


\begin{proof}
Similarly to above, if follows from \cite[Corollary 2.3]{BN} that each of the intervals $[-1,\alpha_0]$, $[\alpha_0,\alpha_1]$, \ldots, $[\alpha_{k-1},1)$
contains an inner product of $C$. The pigeonhole principle now gives that one of these intervals does not have an inner product of $C$
as its interior point. Therefore, again by \cite[Corollary 2.3]{BN}, we conclude that the inner products are exactly
$\alpha_0,\alpha_1,\ldots,\alpha_{k-1}$ and $|C|=L_{2k-1}(n,s)$.
\end{proof}

\section{Distance distributions}

Denote by
\[ A_t(x):=|\{y \in C: \langle x,y \rangle \}| \]
the number of the points of $C$ having inner product $t$ with $x$. Then the system of 
numbers $(A_t(x): t  \in [-1,1))$ is called distance distribution of $C$ with respect of $x$. 

For $x \in C$ let $(X,Y,Z)=(A_a(x),A_b(x),A_c(x))$ be the distance distribution of $C$ with respect of $x$. 
Applying the quadrature formula \eqref{qf} with the polynomials $f(t)=t^{i}$, $i=0,1,\ldots,5$, we obtain that 
the seven numbers $a$, $b$, $c$, $X$, $Y$, $Z$ and $M=|C|$ satisfy the following system of six equations
\begin{equation} \label{syst1}
a^iX+b^iY+c^iZ=f_i M-1, \ \ i=0,1,\ldots,5, 
\end{equation}
where $f_i=0$ for odd $i$, $f_0=1$, $f_2=1/n$ and $f_4=3/n(n+2)$.

Taking the equations with odd $i$
\[ aX+bY+cZ=a^3X+b^3Y+c^3Z=a^5X+b^5Y+c^5Z=-1. \]
and assuming that $a+b$, $b+c$, $c+a$ are all nonzero, we resolve with respect to $X$, $Y$ and $Z$ 
(see \cite[Theorem 3.4]{BDL}; this is a Vandermonde-like system) to obtain
\begin{eqnarray*}
X &=& -\frac{(1-b^2)(1-c^2)}{a(a^2-b^2)(a^2-c^2)}, \\
Y &=& -\frac{(1-c^2)(1-a^2)}{b(b^2-c^2)(b^2-a^2)}, \\
Z &=& -\frac{(1-a^2)(1-b^2)}{c(c^2-a^2)(c^2-b^2)}. \\
\end{eqnarray*}
This implies, in particular, that the distance distribution of $C$ does not depend on the choice of the point $x$ and we therefore will omit $x$ 
in what follows.  

The computation of the distance distribution of the codes in Conjecture \ref{cb} (3) gives 
\begin{eqnarray*}
X &=& \frac{m(m^2-2)(m-1)^3}{4}, \\
Y &=& (m^2-1)^3, \\
Z &=& \frac{m(m^2-2)(m+1)^3}{4}. \\
\end{eqnarray*}

Further algebraic manipulations with the system \eqref{syst1} lead to another proof of Theorem \ref{max-l}. In particular, we
obtain that $a$, $b$, and $c$ are the roots of the equation 
\begin{equation} \label{GrindEQ__5}
(n+2)[n(n+3)-2M]t^3-n(n+2)(n-1)t^2+(6M-5n^2-7n)t+n(n-1)=0
\end{equation}
(compare to \eqref{abc-equ}). We use \eqref{GrindEQ__5} to compute 
the elementary symmetric polynomials $a+b+c$, $ab+bc+ca$, and $abc$ as rational functions of $n$ and $M$. 
It is clear that further advances in this direction can be based only on the investigation of the integrality conditions for the distance 
distribution of $C$. We focus on this in the next section. 

\section{Investigation of $XYZ$}

Since $X,Y,Z$ are not symmetric in $a$, $b$, $c$, we need to produce and consider symmetric expressions 
in order to express then as functions of $n$ and $M$. We present the investigation of the product $XYZ$.  It follows from the 
explicit formulas from the previous section that 
\begin{equation}
\label{GrindEQ__15}
XYZ=\frac{1}{abc} \left(\frac{(1-a^2)(1-b^2)(1-c^2)}{(a^2-b^2)(b^2-c^2)(c^2-a^2)}\right)^2.
\end{equation}  

Using Vieta formulas and elementary symmetric polynomials from \eqref{GrindEQ__5} we consecutively 
compute
\[ \frac{(1-a^2)(1-b^2)(1-c^2)}{(a+b)(b+c)(c+a)}=\frac{M(n-1)}{n(n+2)},\] 
\[ abc=\frac{n(n-1)}{(n+2)(2M-n(n+3))},\]
and (the most complicated) 
\[ (a-b)^2(b-c)^2(c-a)^2=\frac{R_1(n,M)}{(n+2)^3(2M-n(n+3))^4},\] 
where
\begin{eqnarray*}
R_1(n,M) &=&1728M^4-5184M^3n^2-8640M^3n+360M^2n^5+5544M^2n^4 +18936M^2n^3 \\ 
&& +16632M^2n^2-528Mn^7 -3424Mn^6-13056Mn^5-23712Mn^4-14576Mn^3 \\
&& +4n^{10}+ 178n^9+1086n^8+3428n^7+7856n^6+10218n^5+4878n^4.
\end{eqnarray*}
Therefore,
\begin{equation}
\label{GrindEQ__19}
XYZ=\frac{M^2\left(n-1\right)(n+2)^2{\left(2M-n\left(n+3\right)\right)}^5}{R_1(n,M)n^3}.
\end{equation}  

We proceed with derivation of divisibility conditions from \eqref{GrindEQ__19}. As usually, we denote by $v_p(A)$ the 
largest power of the prime number $p$ which divides an integer $A$.

\begin{theorem} \label{lem-n-2M}
If $C \subset \mathbb{S}^{n-1}$ is a spherical 3-distance 5-design, then $n$ divides $2M$. 
\end{theorem}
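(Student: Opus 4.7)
The plan is to rewrite formula \eqref{GrindEQ__19} as the integer identity
\[
n^{3}\, R_{1}(n,M)\cdot XYZ \;=\; M^{2}(n-1)(n+2)^{2}\bigl(2M-n(n+3)\bigr)^{5},
\]
and, for each prime $p$, to analyze the $p$-adic valuations of both sides under the contrary assumption $v_{p}(n)>v_{p}(2M)$.

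Fix such a prime $p$ and set $k:=v_{p}(n)\ge 1$, $j:=v_{p}(M)$; suppose for contradiction $j<k$ (equivalently $j\le k-2$ when $p=2$, due to the factor of $2$ in $2M$). The right-hand side is then straightforward to evaluate: $v_{p}(n-1)=0$; $v_{p}(n+2)$ vanishes for odd $p$ and equals $1$ for $p=2$ (using $k\ge 2$); and the hypothesis forces $v_{p}\bigl(2M-n(n+3)\bigr)=v_{p}(2M)$ since $v_{p}\bigl(n(n+3)\bigr)\ge k>v_{p}(2M)$ (for $p=3$ this uses the easy fact $v_{3}(n+3)\ge 1$ whenever $3\mid n$). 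A short tally gives $v_{p}(\mathrm{RHS})=7j$ for odd $p$ and $v_{2}(\mathrm{RHS})=7j+7$.

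For the left-hand side, I would view $R_{1}(n,M)=\sum_{i=0}^{10}c_{i}(M)\,n^{i}$ as a polynomial in $n$ and read off the minimum $M$-degree of each $c_{i}$ directly from the expression for $R_{1}$, obtaining $d_{0}^{\min}=4$, $d_{1}^{\min}=3$, $d_{2}^{\min}=2$, $d_{3}^{\min}=1$ and $d_{i}^{\min}=0$ for $i\ge 4$. Every summand of $R_{1}$ has $p$-valuation at least $d_{i}^{\min}\,j+i\,k$, and the strict chain $4j<3j+k<2j+2k<j+3k<4k<\cdots$ that holds when $j<k$ singles out the $n^{0}$-term $1728M^{4}$ as the unique minimizer. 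Hence $v_{p}(R_{1})\ge v_{p}(1728)+4j$, and substituting into $v_{p}(XYZ)=v_{p}(\mathrm{RHS})-v_{p}(R_{1})-3k$ yields a strictly negative upper bound in each regime: $3(j-k)$ for $p\ge 5$, $3(j-k)-3$ for $p=3$, and $3j+1-3k$ for $p=2$. This contradicts $XYZ\in\mathbb{Z}_{\ge 0}$, establishing $v_{p}(2M)\ge v_{p}(n)$ for every prime $p$, i.e.\ $n\mid 2M$.

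The main obstacle is sharpness of the estimate $v_{p}(R_{1})\ge v_{p}(1728)+4j$ in the small-prime cases $p\in\{2,3\}$, where $v_{p}(1728)>0$ and several coefficients of $R_{1}$ carry non-trivial $p$-adic valuations. A direct inspection of the listed coefficients confirms the $1728M^{4}$ term attains the strict minimum for every $j<k$, with the single borderline exception $p=3$, $j=k-1$ where the $-14576\,M\,n^{3}$ summand also reaches valuation $4k-1$; but any cancellation at this boundary can only \emph{increase} $v_{3}(R_{1})$, which only strengthens the contradiction. The slight asymmetry $j\le k-2$ at $p=2$ is simply an accounting of the extra factor $v_{2}(2)=1$ appearing symmetrically on both sides.
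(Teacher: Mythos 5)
Your proposal is correct and follows essentially the same route as the paper: both arguments compare $p$-adic valuations of the numerator and denominator of \eqref{GrindEQ__19} under the contrary assumption $v_p(n)>v_p(2M)$, isolating the term $1728M^4$ as the one controlling $v_p(R_1)$ and matching it against the exact count $7v_p(M)$ (resp.\ $7v_2(M)+7$ for $p=2$) on the other side. Your handling of the small primes is in fact a little more explicit than the paper's, which only remarks that ``the case $p=3$ needs separate consideration'' and uses the cruder bound $v_2(R_1)\ge v_2(4M^4)$, but the underlying idea is identical.
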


\begin{proof} The cases $n=2$ and $M=n(n+1)$ (i.e. $C$ is a tight spherical 5-design) are trivial. So we may continue in the above context. 

First we see that $n$ divides $M^2(2M)^5$. Hence, all odd primes divisors of $n$ divide $2M$ as well. 
For any odd prime divisor $p$ of $n$ we assume that $v_p(M)<v_p(n)$ and compute the power of $p$ in the numerator as 
\[ v_p(M^2{\left(2M-n\left(n+3\right)\right)}^5)=7v_p\left(M\right) \] 
and the denominator as 
\[ v_p (n^3R_1(n,M))=3v_p\left(n\right)+4v_p\left(M\right) \]
(the case $p=3$ needs separate consideration but leads to the same conclusion). 
Hence, $7v_p\left(M\right)\ge 3v_p\left(n\right)+4v_p\left(M\right)$, i.e. $v_p\left(M\right)\ge v_p\left(n\right)$, a contradiction. 

Further, denote $v_2\left(M\right)=y$, $v_2\left(n\right)=x$ and assume that $x>1+y\ge 2$ for a contradiction (the cases $y=0$ and 1 are easily
ruled out). Then the powers of 2 in the denominator and the numerator are
\[v_2 (n^3R_1(n,M))\ge 3x+v_2\left(4M^4\right)=3x+4y+2, \] 
\[ v_2(M^2\left(n-1\right)(n+2)^2{\left(2M-n\left(n+3\right)\right)}^5)=2y+5\left(y+1\right)+2=7y+7, \]
respectively. Therefore 
\[ 3y+5\ge 3x\ge 3\left(y+2\right)=3y+6, \] 
a contradiction. This completes the proof. \end{proof}

\begin{remark}
Note that the conclusion of Theorem \ref{lem-n-2M} holds true in the cases (1) and (2) of Conjecture \ref{cb}.
\end{remark}

Let $2M=Tn$ for some positive integer $T$. After cancelation, we arrive at
\begin{equation} \label{equ-xyz-T}
XYZ=\frac{T^2\left(n-1\right)(n+2)^2{\left(T-n-3\right)}^5}{4R_2(n,T)},
\end{equation}
where
\begin{eqnarray*}
R_2(n,T) &=& 108T^4-648T^3n^2-1080T^3n+90T^2n^5+1386T^2n^4+4734T^2n^3 \\
&& +4158T^2n^2-264Tn^7-1712Tn^6-6528Tn^5-11856Tn^4-7288Tn^3 \\
&& +2\left(n+1\right)\left(n+3\right)(2n^4+81n^3+213n^2+619n+813).
\end{eqnarray*}
We were unable to continue with significant divisibility conclusions from \eqref{equ-xyz-T}. 

In the end of this section we express either in terms of $m$ and the dimension $n$ 
the above parameters in the case (3) of Conjecture \ref{cb}. Taking the values of $X$, $Y$ and $Z$ from Section 3, we obtain 
\[ XYZ=\frac{m^2(m^2-2)^2(m^2-1)^3}{16}=\frac{(n+5)(n-1)^2(n+2)^3}{2^4.3^6}. \]
Theorem \ref{lem-n-2M} gives $T=m^4=(n+5)^2/9$.

\section{Derived codes}

We descibe a construction from \cite{DGS} defining derived codes (sections) which produces good codes in lower dimensions 
provided the original codes are good. 

Following \cite[Section 8]{DGS} we consider the three derived codes, $C_a$, $C_b$, and $C_c$,
of $C$ with respect to the three inner products, respectively. These codes are spherical 3-designs \cite[Theorem 8.2]{DGS}
and admit at most three
inner products. Therefore their distance distributions satisfy a system of 4 equations, which could be investigated.

For fixed point $x \in C$, the set
\[ C_a(x):=\{ y \in C: \langle x,y \rangle =a \} \]
defines, after rescaling on $\mathbb{S}^{n-2}$, a spherical 3-design which we denote by $C_a$. The cardinality of $C_a$ is obviously equal to $X$. 
The inner products of $C_a$ are found by the Cosine Law to be these among
\[ \frac{a-a^2}{1-a^2}=\frac{a}{1+a}, \ \ \frac{b-a^2}{1-a^2}, \ \ \frac{c-a^2}{1-a^2} \]
belonging to $[-1,1]$. 

The distance distribution of $C_a$ does not depend on the choice of the point and will be denoted
by $(X_a,Y_a,Z_a)$. It satisfies the system 
\begin{equation} \label{syst2}
\left(\frac{a}{1+a}\right)^i X_a + \left(\frac{b-a^2}{1-a^2}\right)^i Y_a+ \left(\frac{c-a^2}{1-a^2}\right)^i Z_a = f_iX-1, \ \ i=0,1,2,3,
\end{equation}
where $f_i$ are defined as in \eqref{syst1} (note that $n$ is replaced by $n-1$). 
The codes $C_b$ and $C_c$ are defined analogously. 

\section{Computer investigation of the distance distributions}

The conditions from Sections 2 and 3 allow easy computational investigation of particular cases of
Conjecture \ref{cb}. For fixed dimension $n$, we consider all feasible cardinalities 
\[ M \in \left(n(n+1), \frac{n(n+1)(n+5)}{6}\right], \]
satisfying the condition $2M=nT$ from Theorem \ref{lem-n-2M}. We compute $X$, $Y$ and $Z$ in each case
and check if they are nonnegative integers. It is clear that enough precision ensures the correctness of the results. 

We implemented this algorithm in all dimensions $n \leq 1000$. This computation confirms Conjecture \ref{cb} in all cases
except for the pairs
\[ (n,T)=(341,3744), \ (638,7011), \]
where analysis of the derived codes is needed. In both cases, the distance distribution of the derived codes gives a contradiction.

For $\left(n,T\right)=(341,\ 3744)$ we find from \eqref{GrindEQ__5} that $\left(a,b,c\right)=(-\frac{1}{7},-\frac{1}{35},\frac{1}{14})$. 
It follows that  $X=23205.$ Further, resolving $\eqref{syst2}$ for $i=0,1,2,$ we find
 $\left(X_a,Y_a,Z_a\right)=\left(\frac{1872}{7},\frac{571392}{49}, \frac{552500}{49}\right)$, a contradiction. 
Analogously, in the case $\left(n,T\right)=(638,\ 7011)$ we find $\left(a,b,c\right)=(-\frac{1}{8},-\frac{1}{40},\frac{1}{20})$, whence
$X=40508$ and, finally, $\left(X_a,Y_a,Z_a\right)=\left(\frac{52193}{224},\frac{1245375}{56}, \frac{577125}{32}\right)$, a contradiction.

 We note that these two 
exceptional solutions show that the proof can not be contibued by proving that all solutions satisfy $n+5|9T$,
as the expected solution suggests. 

Summarizing, we have verified the following theorem. 

\begin{theorem}
Conjecture \ref{cb} is true in all dimensions $n \leq 1000$. 
\end{theorem}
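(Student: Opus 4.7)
The plan is to carry out, for each dimension $n \leq 1000$, an exhaustive finite search over candidate cardinalities $M$, using the results of Sections 2--4 to prune the search and then the derived-code machinery of Section 5 as a second filter on the survivors.

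First I would enumerate, for each fixed $n$, all integers $M$ in the Delsarte--Goethals--Seidel range $n(n+1) \leq M \leq n(n+1)(n+5)/6$ that satisfy the necessary divisibility condition $n \mid 2M$ from Theorem \ref{lem-n-2M}. This cuts the list by a factor of $n$, leaving at most $O(n^2)$ candidates per dimension, well within reach of exact arithmetic. For each surviving pair $(n,M)$, Theorem \ref{max-l} forces the inner products $a<b<c$ to be the three real roots of the cubic \eqref{GrindEQ__5}, so they are determined by $(n,M)$ alone; the triple $(X,Y,Z)$ is then read off from the closed-form expressions of Section 3. I would test whether $X$, $Y$, $Z$ are simultaneously nonnegative integers, eliminating every pair that fails.

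After this first filter, the expected survivors are the tight spherical $5$-designs ($M=n(n+1)$, case (2) of Conjecture \ref{cb}) and, whenever $n=3m^2-5$, the codes derived from a tight spherical $7$-design (case (3)); both are accounted for. For any residual survivor I would invoke Section 5: pick the derived code $C_a$ at the inner product $a$, form the four equations \eqref{syst2} for $(X_a,Y_a,Z_a)$, solve the resulting linear system (of Vandermonde type), and test whether the components are nonnegative integers. A failure here rules out the pair. The two anomalous pairs $(n,T)=(341,3744)$ and $(638,7011)$ reported in Section 6 show that this derived-code step is not cosmetic: they pass the $(X,Y,Z)$ integrality test but are killed by non-integrality of $(X_a,Y_a,Z_a)$, so a purely Section-4 divisibility shortcut such as $n+5\mid 9T$ is provably insufficient.

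The main obstacle is not the combinatorial size of the search but numerical reliability. The three roots of \eqref{GrindEQ__5} can be extremely close, and evaluating \eqref{GrindEQ__19} or the components of $(X,Y,Z)$ in floating point suffers severe cancellation in high dimension; similarly, the coefficients of \eqref{syst2} depend on quantities like $1-a^2$ which may be tiny. To guarantee correctness I would work with rational functions of $(n,M)$ throughout, using the elementary symmetric polynomials of $a,b,c$ extracted from \eqref{GrindEQ__5} rather than the roots themselves, and defer to arbitrary-precision arithmetic only when forced to expand $\left((b-a^2)/(1-a^2)\right)^i$ for the derived-code system. A certificate of the computation should then display, for each $n\leq 1000$, the full list of pairs $(n,M)$ passing the first filter and, for the two exceptions, the explicit rationals $(X_a,Y_a,Z_a)$ that exhibit the contradiction.
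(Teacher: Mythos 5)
Your proposal follows essentially the same route as the paper: enumerate the cardinalities in the Delsarte--Goethals--Seidel range satisfying $n \mid 2M$, test integrality of $(X,Y,Z)$ computed from the roots of \eqref{GrindEQ__5}, and eliminate the two surviving pairs $(n,T)=(341,3744)$ and $(638,7011)$ by the non-integrality of the derived-code distance distribution $(X_a,Y_a,Z_a)$ from \eqref{syst2}. Your additional remarks on exact rational arithmetic are a sensible implementation refinement of the paper's ``enough precision'' claim, not a different method.
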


\section{Conclusions}

We have proved that all spherical 3-distance 5-designs have inner products exactly as coming from 
the Levenshtein framework \cite{Lev92,BD} and generalize this result. We prove that in all cases twice the cardinality 
$2M$ is divisible by the dimension $n$. Using a computer we have verified Conjecture \ref{cb} in all dimensions $n \leq 1000$.

\end{document}